\title{Entropy Bounds for Perfect Matchings in Bipartite Hypergraphs}
\author{Tantan Dai}
\email{tdai44@gatech.edu}
\author{Alexander Divoux}
\email{adivoux@princeton.edu}
\author{Tom Kelly}
\email{tom.kelly@gatech.edu}
\address{Georgia Institute of Technology}
\thanks{Research supported by the National Science Foundation under Grant No. DMS-2247078.}
\begin{document}

\begin{abstract}
    A hypergraph is \textit{bipartite with bipartition $(A, B)$} if every edge has exactly one vertex in $A$, and a matching in such a hypergraph is \textit{$A$-perfect} if it saturates every vertex in $A$. We prove an upper bound on the number of $A$-perfect matchings in uniform hypergraphs with small maximum codegree. Using this result, we prove that there exist order-$n$ Latin squares with at most $(n/e^{2.117})^n$ transversals when $n$ is odd and $n \equiv 0 \pmod 3$. We also show that $k$-uniform $D$-regular hypergraphs on $n$ vertices have at most $((1+o(1))q/e^k)^{Dn/k}$ proper $q$-edge-colorings when $q = (1+o(1))D$ and the maximum codegree is $o(q)$.
\end{abstract}

\maketitle

\section{Introduction}\label{sec:intro}
An \textit{order-$n$ Latin square} is an $n \times n$ array of $n$ symbols such that each row and each column contains each symbol exactly once. Throughout this paper we assume that $n$ is sufficiently large for various asymptotic inequalities to hold. A \textit{transversal} in an order-$n$ Latin square is a collection of $n$ cells which do not share any row, column, or symbol. A \textit{partial transversal} is any collection of $k \le n$ such cells. Transversals are central objects in the study of Latin squares with a long history. For instance, in 1782 Euler considered the question of determining for which $n$ does there exist a Latin square which can be decomposed into $n$ disjoint transversals. Another well-known example is the Latin square corresponding to the Cayley table of the cyclic group $\ZZ_n$. The number of transversals in this Latin square has been studied under many guises, such as additive triples of bijections and toroidal semi-queen configurations. Moreover, each orthomorphism of a finite group corresponds to a transversal in its Cayley table.

Two natural lines of inquiry concern the existence and enumeration of transversals in Latin squares. On the existence side, when $n$ is even, the Cayley table of $\ZZ_n$ provides examples of a Latin square with no transversals. Despite this, the Ryser-Brualdi-Stein conjecture \cite{R67, BR91, S75} states that every order-$n$ Latin square has a partial transversal of size $n - 1$, and furthermore a full transversal if $n$ is odd. Montgomery \cite{M23} recently verified that this is true for large even $n$, a significant milestone for the existence question. An overview of the techniques used in \cite{M23} and related problems can be found in Montgomery's survey \cite{M24} on Latin transversals. 

Turning to enumeration, even for $\ZZ_n$ counting the number of transversals in the corresponding Latin square has historically been difficult. In 1991, Vardi \cite{V91} conjectured that for sufficiently large odd $n$, this number lies between $c_1^n n!$ and $c_2^n n!$ for appropriate constants $c_1,c_2\in(0,1)$. Let $T(n)$ denote the maximum possible number of transversals across \textit{all} order-$n$ Latin squares. McKay, McLeod, and Wanless \cite{MMW06} showed that $b^n \le T(n) \le c^n\sqrt n n!$ for $n \ge 5$, $b \approx 1.719$, and $c \approx 0.614$, thereby confirming Vardi's upper bound in the case of $\ZZ_n$. Later, Taranenko \cite{T15} improved the general upper bound by showing $T(n) \le \paren{(1+o(1))\frac{n}{e^2}}^n$. Finally, in 2016 Glebov and Luria \cite{GL16} used a probabilistic construction to prove a matching lower bound, thus establishing
\[
    T(n) = \paren{(1+o(1))\frac{n}{e^2}}^n.
\]

Along similar lines, let $t(n)$ denote the minimum number of transversals in an order-$n$ Latin square. Analogous questions can be asked about the asymptotics of $t(n)$. The example given above of the Cayley table of $\ZZ_n$ shows $t(n) = 0$ when $n$ is even. In his surveys on Latin transversals \cite{Wa07survey, Wa11survey}, Wanless asked about the function $t(n)$, noting that no significant upper bounds on $t(n)$ for $n$ odd have been found. In \cite{GL16}, Glebov and Luria raised the following question: is it true that for $n$ odd, $t(n) = (1-o(1))^n\cdot T(n)$? As motivation for why this could be expected, both random Latin squares and Latin squares arising from certain group structures exhibit behavior consistent with the proposed relation, as described below. 

Kwan \cite{K20} studied the number of transversals in random Latin squares, showing that asymptotically almost surely a uniformly random order-$n$ Latin square has at least $\paren{(1-o(1))\frac{n}{e^2}}^n$ transversals. This bound was later tightened by Eberhard, Manners, and Mrazovi\'c \cite{EMM23} to 
\[
    \paren{e^{-1/2} + o(1)}\frac{(n!)^2}{n^n}.
\]
Together with Taranenko's result, both of these results imply that almost all order-$n$ Latin squares have $(1-o(1))^n\cdot T(n)$ transversals.

Additionally, for Latin squares arising from finite abelian groups (such as $\ZZ_n$), the number of transversals is also asymptotically close to $T(n)$.
Eberhard, Manners, and Mrazovi\'c \cite{EMM19} determined using Fourier-analytic methods the precise asymptotics for the number of transversals in Latin squares arising from finite abelian groups. In particular, they confirmed Vardi's conjecture by proving that for any finite abelian group $G$ of odd order $n$, the number of transversals in the Latin square corresponding to the Cayley table of $G$ is 
\[
    \paren{e^{-1/2} + o(1)}\frac{(n!)^2}{n^{n-1}}.
\]

In this paper, we answer the Glebov-Luria question in the negative in general, by exhibiting Latin squares with exponentially fewer transversals than $T(n)$.   
\begin{thm}\label{thm:transversals}
    For sufficiently large $n$ with $n \equiv 0 \pmod 3$, there exist order-$n$ Latin  squares with at most
    \[
        \paren{\frac{n}{e^{2.117}}}^n
    \]
    transversals.
\end{thm}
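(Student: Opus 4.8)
The strategy is to build an order-$n$ Latin square whose transversals are governed by an auxiliary bipartite hypergraph to which the entropy counting bound (the main theorem promised in the abstract) applies. A transversal in a Latin square $L$ can be viewed as an $A$-perfect matching: let $A$ be the set of $n$ columns (or symbols), and let the hyperedges encode which (column, row, symbol) triples occur as cells; a transversal is then exactly a perfect matching. To beat the generic bound $(n/e^2)^n$, I want to exploit structure in $L$ that forces the relevant hypergraph to have \emph{fewer} edges or smaller degrees than a generic Latin square, so that the entropy bound improves the constant $e^2$ to $e^{2.117}$. The divisibility condition $n \equiv 0 \pmod 3$ strongly suggests the construction is built from a product or blow-up of a small fixed structure on $3$ symbols — for instance, taking $L$ to be (a modification of) the Cayley table of a group of order $n$ divisible by $3$, or an $n/3 \times n/3$ array of $3 \times 3$ blocks, chosen so that every transversal is forced to distribute its cells evenly among the blocks.

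\medskip
\noindent\textbf{Key steps.} First, I would fix the explicit construction: most likely $L$ is the Cayley table of $\ZZ_{n/3} \times \ZZ_3$, or a related "thickened" Latin square, and I would verify it is a genuine Latin square. Second, I would set up the bipartite hypergraph $H$: vertex classes indexed so that the side $A$ has size $n$ and $A$-perfect matchings in $H$ biject with transversals of $L$; crucially I must check that $H$ is $k$-uniform for the relevant $k$ and has maximum codegree small enough (the $o(q)$-type hypothesis in the cited theorem), which is where the specific algebraic structure of the construction is used. Third, I would partition the counting: because of the mod-$3$ structure, every transversal splits across the three cosets/blocks in a controlled way, so the count of transversals factors (or is bounded by a product) over $n/3$ local choices, each contributing a factor strictly smaller than what a generic cell contributes. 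Fourth, I would apply the entropy/permanent upper bound from the main theorem to each local piece (or to $H$ directly), and optimize the resulting exponential rate; the point is that the local structure replaces the "effective $1/e^2$ per cell" by something like $1/e^{2.117}$.

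\medskip
\noindent\textbf{The main obstacle.} The delicate part is arranging that the construction simultaneously (i) is a valid Latin square, (ii) yields a hypergraph satisfying the small-codegree hypothesis needed to invoke the entropy theorem, and (iii) actually produces a \emph{numerical} improvement of the constant from $2$ to $2.117$ rather than just $2$. Items (ii) and (iii) pull against each other: making the structure "more rigid" to lower the transversal count tends to create large codegrees (many transversals through a fixed pair of cells), which is exactly what the theorem forbids, while keeping codegrees small pushes the square toward generic behavior where no improvement is possible. I expect the resolution is a carefully tuned construction — perhaps a random modification of an algebraic Latin square, or a blow-up where the "seed" $3 \times 3$ (or $3 \times n/3$) pattern is chosen to have provably few partial transversals — together with a somewhat involved optimization computing the entropy bound for the blown-up object. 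Finally, I would check that the error terms (the $o(1)$ in the hypothesis $q = (1+o(1))D$ and the $(1+o(1))$ factors) are absorbed into the slack between $e^2$ and $e^{2.117}$, so the stated clean bound $(n/e^{2.117})^n$ holds for all sufficiently large $n$ with $n \equiv 0 \pmod 3$.
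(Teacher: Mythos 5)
Your high-level framework is correct: you encode transversals of a Latin square $L$ as $A$-perfect matchings of a $3$-uniform bipartite hypergraph and then invoke the entropy bound from Theorem~\ref{thm:main}. But the mechanism you propose for beating $e^2$ --- a blow-up built from $\ZZ_{n/3}\times\ZZ_3$ with transversals forced to split evenly across blocks, and the count factoring over $n/3$ local pieces --- is not what makes this work, and it is not clear it could be made to work. The paper's actual engine is entirely different, and it is one your proposal never lands on: it uses a theorem of Egan and Wanless that for $n\equiv 0\pmod 3$ there is an order-$n$ Latin square with an $(n/3-1)\times(n/3)$ subrectangle consisting entirely of \emph{transversal-free} cells (cells that appear in no transversal). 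The crucial trick is then to \emph{omit those cells from the edge set} of the auxiliary hypergraph. Since they lie in no transversal, transversals still correspond exactly to $R$-perfect matchings, but the omission drops the average degree of the $R$-side from $n$ to $\tfrac{8}{9}n+\tfrac13$, while the max degree on the other side remains $n$ and the max codegree remains $1$. Feeding $q=\tfrac{8}{9}n+\tfrac13$ (instead of $q=n$) into Theorem~\ref{thm:main} gives $\exp(n(\log q - 2 + o(1)))$, and since $\log(8/9)\approx -0.118$, this is at most $(n/e^{2.117})^n$.

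Your worry that ``rigid structure lowers the transversal count but creates large codegrees'' points you away from the right idea: removing edges from the hypergraph can only \emph{decrease} codegrees, so there is no tension; the Latin square itself is a genuine Latin square with codegree $1$ in the usual encoding, and deleting transversal-free triples keeps codegree at $1$. Likewise, the ``effective $1/e^2$ per cell is replaced by $1/e^{2.117}$'' intuition is upside down: the per-step entropy rate in the chain rule is still governed by the $-k$ term coming from $\int_0^1 \log(x^k)\,dx$; what changes is the multiplicative prefactor $q$ (the average number of legal edges per $a\in A$), which shrinks by a factor $8/9$. So the missing idea in your proposal is the existence of a Latin square with a linearly-sized transversal-free subrectangle and the observation that transversal-free cells can be dropped from the hypergraph without losing any transversals. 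Without that, neither your block-factoring heuristic nor a generic blow-up construction gives a route to the claimed constant.
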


Call an entry of a Latin square \textit{transversal-free} if it does not appear in any transversal of that Latin square. For $n \equiv 0 \pmod 3$, Egan and Wanless \cite{EW12} constructed order-$n$ Latin squares with a linear-sized subrectangle consisting entirely of transversal-free entries. In our proof of Theorem \ref{thm:transversals}, we show via an entropy argument that this construction has exponentially fewer than $T(n)$ transversals. This aligns with the intuition that Latin squares with a significant number of transversal-free entries should not have too many transversals.

A remaining question is to determine a lower bound for $t(n)$ when $n$ is odd. Ghafari and Wanless \cite{GW24} asked whether there exist odd-order Latin squares with only one transversal, though they consider this unlikely. We believe that there exists a constant $C > 0$ such that every Latin square of odd order $n$ has at least $(n/C)^n$ transversals for sufficiently large $n$.

We present Theorem \ref{thm:transversals} as a corollary to the following more general result upper-bounding the number of perfect matchings in bipartite hypergraphs. This connection arises from the standard representation of Latin squares as $3$-uniform tripartite hypergraphs, in which transversals correspond to perfect matchings. 

A hypergraph is \textit{$k$-uniform} if every edge has exactly $k$ vertices. In this paper, $k$ will always be a constant independent of other parameters. The \textit{degree} of a vertex in a hypergraph is the number of edges containing it, and the \textit{codegree} of two vertices is the number of edges containing both. The maximum degree and maximum codegree of a hypergraph $\mc H$ is denoted $\Delta(\mc H)$ and $\Delta_2(\mc H)$, respectively. A \textit{matching} in a hypergraph is a collection of pairwise disjoint edges. A hypergraph is \textit{bipartite with bipartition $(A,B)$} if $(A,B)$ is a partition of the vertex set such that every edge has exactly one vertex in $A$. A matching $M$ in such a hypergraph is \textit{$A$-perfect} if for every $a\in A$, there exists $e\in M$ such that $a\in e$.

\begin{thm}\label{thm:main}
    Let $\mc H$ be a $(k+1)$-uniform bipartite hypergraph with bipartition $(A,B)$, and let $\rho = \frac{|B|}{|A|}$.  If the average degree of the vertices in $A$ is at most $q$, every vertex in $B$ has degree at most $D$, and the maximum codegree of $\mc H$ satisfies $\Delta_2(\mc H) = o(q)$, then the number of $A$-perfect matchings in $\mc H$ is at most
    \[
        \exp\paren{|A|\int_0^1 \log\paren{(\rho-k)D+q(o(1)+x^k)}\mathrm dx}.
    \]
\end{thm}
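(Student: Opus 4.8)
The plan is to run a Brégman-type entropy argument: reveal the edges of a uniformly random $A$-perfect matching one at a time along a uniformly random ordering of $A$, bound the conditional entropy of each edge by the logarithm of the number of edges still available at its $A$-vertex, and then estimate that count. Let $N$ be the number of $A$-perfect matchings of $\mc H$ and let $M$ be a uniformly random one, so $H(M)=\log N$; let $\sigma$ be a uniformly random linear order of $A$, independent of $M$. For $a\in A$ write $e(a)$ for the edge of $M$ at $a$, and let $R_a=R_a(\sigma)$ be the set of vertices preceding $a$. Since $\sigma\perp M$, the chain rule gives
\[
    \log N \;=\; H(M) \;=\; H(M\mid\sigma) \;=\; \sum_{a\in A}\mathbb{E}_{\sigma}\!\left[H\!\left(e(a)\,\middle|\,\{e(a')\}_{a'\in R_a},\,\sigma\right)\right].
\]
Given the already-revealed edges, $e(a)$ must be one of the edges through $a$ disjoint from all of them; writing $N_{a,R}$ for that number (a function of $R$ and of the revealed edges), we get $H(e(a)\mid\cdots)\le\mathbb{E}[\log N_{a,R_a}]$. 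Conditioning on the position $i$ of $a$ in $\sigma$ — so $R_a$ is then a uniformly random $(i-1)$-subset of $A\setminus\{a\}$, independent of $M$ — and applying Jensen's inequality to $\log$ yields
\[
    \log N \;\le\; \sum_{a\in A}\frac{1}{|A|}\sum_{i=1}^{|A|}\log\mathbb{E}_{M,\,R}\!\left[N_{a,R}\;\middle|\;|R|=i-1\right].
\]

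The core of the argument is the estimate
\[
    \mathbb{E}_{M,\,R}\!\left[N_{a,R}\;\middle|\;|R|=i-1\right] \;\le\; d(a)\,(1-x_i)^k + Y_a + Z_a, \qquad x_i := \tfrac{i-1}{|A|-1},
\]
where $d(a)$ is the degree of $a$ and $Y_a,Z_a\ge 0$ may be taken so that $\sum_{a\in A}Y_a = o(q|A|)$ and $\sum_{a\in A}Z_a\le(\rho-k)D\,|A|$ (both after averaging over $M$, the latter in fact for every $M$). To get this, classify the edges $e$ through $a$: (i) those all $k$ of whose vertices in $B$ are covered by $M$ via $k$ edges of $M$ with distinct $A$-endpoints, all different from $a$ — such an $e$ survives the revealing of $R$ exactly when none of those $k$ $A$-endpoints lies in $R$, which has probability $\binom{|A|-1-k}{i-1}/\binom{|A|-1}{i-1}\le(1-x_i)^k$, so these contribute at most $d(a)(1-x_i)^k$; and (ii) all remaining edges through $a$, each of which must have a vertex in $B$ uncovered by $M$, have two vertices in $B$ lying in a common edge of $M$, or share a vertex in $B$ with $e(a)$. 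Edges with an uncovered vertex are controlled because $M$ leaves exactly $(\rho-k)|A|$ vertices of $B$ uncovered, each of degree at most $D$, so their number summed over $a$ is at most $(\rho-k)|A|D$; this defines $Z_a$. The other two kinds are controlled by the maximum codegree hypothesis $\Delta_2(\mc H)=o(q)$: the number of edges through $a$ sharing a $B$-vertex with $e(a)$ is at most $k\,\Delta_2(\mc H)=o(q)$ deterministically, while the number with a colliding pair, summed over $a$ and in expectation over $M$, is at most $\binom{k}{2}\Delta_2(\mc H)\,|A| = o(q|A|)$; together these define $Y_a$. The delicate point — and the step I expect to be the main obstacle — is that the class-(ii) edges cannot be controlled vertex-by-vertex: the uncovered-vertex contribution is only $O((\rho-k)D)$ \emph{on average over} $a$, and collisions are rare only \emph{on average over} $M$, so one must bound the colliding-pair count by a first-moment argument over $M$ via the codegree hypothesis and then gather all of these accumulated errors into a single $o(q)$ term.

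Finally, the passage from the last two displays to the stated bound is another application of Jensen. The function $G(d,w):=\frac{1}{|A|}\sum_{i=1}^{|A|}\log\!\left(d(1-x_i)^k+w\right)$ is concave and nondecreasing in $(d,w)$, so
\[
    \log N \;\le\; \sum_{a\in A}G\!\left(d(a),\,Y_a+Z_a\right) \;\le\; |A|\,G\!\left(\frac{1}{|A|}\sum_{a\in A}d(a),\ \frac{1}{|A|}\sum_{a\in A}(Y_a+Z_a)\right) \;\le\; |A|\,G\!\left(q,\ (\rho-k)D+o(q)\right),
\]
using the hypothesis that the average $A$-degree is at most $q$ together with the bounds above. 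Now $|A|\,G(q,(\rho-k)D+o(q)) = \sum_{i=1}^{|A|}\log\!\left(q(1-x_i)^k+(\rho-k)D+o(q)\right)$ is a Riemann sum of a bounded function — the argument of the logarithm stays above the positive quantity $o(q)$ — and so converges to $|A|\int_0^1\log\!\left(q(1-x)^k+(\rho-k)D+o(q)\right)\mathrm dx$; substituting $x\mapsto 1-x$ and absorbing $o(q)/q$ into $o(1)$ turns this into $|A|\int_0^1\log\!\left((\rho-k)D+q(o(1)+x^k)\right)\mathrm dx$, the claimed bound.
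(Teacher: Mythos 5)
Your proposal is correct and follows essentially the same entropy argument as the paper: choose a uniformly random $A$-perfect matching, reveal its edges in a random order, bound the conditional entropy at each step by the logarithm of the number of surviving edges through the current $A$-vertex, split those edges into a ``good'' class surviving with probability roughly $(1-x)^k$ and a ``bad'' class (edges hitting an uncovered $B$-vertex, corresponding to the paper's $T(X)$, or having a codegree collision with the matching, corresponding to $S(X)$) controlled in aggregate over $a$ via the degree and codegree hypotheses, and finish with Jensen's inequality and a passage to the integral. The only substantive differences are presentational: you use a discrete random permutation followed by a Riemann-sum limit, whereas the paper samples independent continuous timestamps $\alpha_a\in[0,1]$ for each $a\in A$, which makes the survival probability exactly $(1-\alpha_a)^k$ and produces the integral immediately with no discretization error to control; and you bring $\mathbb{E}_M$ inside the logarithm via Jensen, whereas the paper's bad-edge bounds (Lemma 3.3) hold deterministically for every matching, so it can simply drop the outer $\mathbb{E}_X$ at the end. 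Incidentally, you are slightly more careful than the paper in one spot: you explicitly place edges that share a $B$-vertex with $X_a$ into the bad class (at most $k\,\Delta_2(\mathcal H)=o(q)$ of them per $a$). For such an edge the legality probability is $(1-\alpha_a)^{k-1}$ or larger rather than $(1-\alpha_a)^k$, so it must be excluded from the good class; the paper asserts probability exactly $(1-\alpha_a)^k$ for every edge outside $\{X_a\}\cup S_a(X)\cup T_a(X)$ and thereby overlooks this case, though the error is negligible and the fix is exactly the one you supply.
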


Theorem \ref{thm:main} is a bipartite analogue of a theorem of Luria \cite[Theorem 3.1]{L17}, which uses entropy to upper-bound the number of perfect matchings in regular hypergraphs. Our setting allows for irregular degrees and imposes bipartite structure, giving rise to new applications.

Another corollary of Theorem \ref{thm:main} concerns proper edge-colorings of hypergraphs. A \textit{proper edge-coloring} of a hypergraph $\mc H$ is an assignment of colors to its edges such that no two edges of the same color share a vertex. The \textit{chromatic index} of $\mc H$, denoted $\chi'(\mc H)$, is the minimum number of colors used by a proper edge-coloring of $\mc H$.

Classically, Vizing's theorem \cite{V64} guarantees that every graph $G$ of maximum degree $\Delta(G)$ has chromatic index at most $\Delta(G) + 1$. In the hypergraph setting, an analogous result of Pippenger and Spencer \cite{PS89} shows that for $k$-uniform hypergraphs $\mc H$ with small codegrees, the chromatic index asymptotically satisfies 
\[
    \chi'(\mc H) \le (1 + o(1))\Delta(\mc H).
\]
Later, Kahn \cite{K96} showed that this asymptotic behavior also extends to list colorings.

The Pippenger-Spencer theorem naturally raises an enumerative question: how many proper edge-colorings exist when the number of available colors $q$ is close to the above chromatic index? By encoding edge-colorings as perfect matchings in an auxiliary bipartite hypergraph, Theorem $\ref{thm:main}$ gives an upper bound on the number of such colorings in the case of regular hypergraphs, which we state as Theorem \ref{thm:colorings}.

\begin{thm}\label{thm:colorings}
    Let $\mc G$ be a $k$-uniform $D$-regular hypergraph on $n$ vertices, and let $q = (1+o(1))D$. If the maximum codegree of $\mc G$ satisfies $\Delta_2(\mc G) = o(q)$, then the number of proper $q$-edge-colorings of $\mc G$ is at most
    \[
        \paren{(1+o(1))\frac{q}{e^k}}^{Dn/k}.
    \]
\end{thm}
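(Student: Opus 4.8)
The strategy is to realize proper $q$-edge-colorings of $\mc G$ as $A$-perfect matchings of an auxiliary bipartite hypergraph and then apply Theorem~\ref{thm:main}. We may assume $q \ge D$, since otherwise some vertex of $\mc G$ lies in $D > q$ edges that would all need distinct colors, so $\mc G$ admits no proper $q$-edge-coloring and the bound holds vacuously. Put $A := E(\mc G)$ and $B := V(\mc G) \times [q]$, and let $\mc H$ be the hypergraph on vertex set $A \cup B$ with edge set $\{\,\{e\} \cup \{(v,c) : v \in e\} : e \in E(\mc G),\ c \in [q]\,\}$. Each edge of $\mc H$ has exactly one vertex in $A$ and exactly $k$ in $B$, so $\mc H$ is $(k+1)$-uniform and bipartite with bipartition $(A,B)$. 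An $A$-perfect matching of $\mc H$ is precisely a choice of one color $c(e)$ for each $e \in E(\mc G)$, and two chosen edges are disjoint if and only if $e \cap e' = \emptyset$ or $c(e) \ne c(e')$; hence $A$-perfect matchings of $\mc H$ are in bijection with proper $q$-edge-colorings of $\mc G$.

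Next I would check the hypotheses of Theorem~\ref{thm:main}. Since $|E(\mc G)| = Dn/k$, we have $|A| = Dn/k$ and $|B| = qn$, so $\rho = |B|/|A| = kq/D$. Each vertex $e \in A$ lies in exactly $q$ edges of $\mc H$ (one per color), so the average degree in $A$ equals $q$, and each vertex $(v,c) \in B$ lies in exactly $\deg_{\mc G}(v) = D$ edges of $\mc H$. For codegrees: two vertices of $A$ have codegree $0$, as every edge of $\mc H$ meets $A$ exactly once; a vertex of $A$ and a vertex of $B$ have codegree at most $1$; and vertices $(v,c),(v',c')$ of $B$ have codegree $0$ unless $c = c'$, in which case it is at most the codegree of $\{v,v'\}$ in $\mc G$. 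Hence $\Delta_2(\mc H) \le \max\{1,\Delta_2(\mc G)\} = o(q)$, and all hypotheses of Theorem~\ref{thm:main} hold with this $q$.

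Applying Theorem~\ref{thm:main} bounds the number of proper $q$-edge-colorings of $\mc G$ by $\exp\paren{|A|\int_0^1 \log\paren{(\rho-k)D + q(o(1)+x^k)}\,\mathrm dx}$. Because $q \ge D$ and $q = (1+o(1))D$, we have $0 \le (\rho-k)D = k(q-D) = o(q)$, so the argument of the logarithm is $q(o(1)+x^k)$ with the error term nonnegative, and the exponent becomes $\tfrac{Dn}{k}\paren{\log q + \int_0^1 \log(x^k + o(1))\,\mathrm dx}$. Since $\int_0^1 \log(x^k)\,\mathrm dx = -k$, and a perturbation of the argument by $o(1)$ changes this integral by only $o(1)$ (the discrepancy is supported near $x = 0$ and tends to $0$ with the perturbation), the exponent equals $\tfrac{Dn}{k}(\log q - k + o(1)) = \tfrac{Dn}{k}\log\paren{(1+o(1))q/e^k}$, which yields the claimed bound.

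Given Theorem~\ref{thm:main}, the reduction and the computation are routine; the one point needing genuine care is the last paragraph, where one must keep the argument of the logarithm positive (which is why we first reduce to the case $q \ge D$) and confirm that the various $o(1)$ contributions — from $q = (1+o(1))D$, from $\Delta_2(\mc G) = o(q)$, and from the $o(1)$-perturbation of $\int_0^1 \log(x^k)\,\mathrm dx$ — merge into the single factor $(1+o(1))$.
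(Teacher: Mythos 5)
Your proof is correct and follows essentially the same route as the paper's: the same incidence hypergraph $\mc H$ on $E(\mc G) \cup (V(\mc G)\times[q])$, the same parameter computations ($|A|=Dn/k$, degree $q$ in $A$, degree $D$ in $B$, $\Delta_2(\mc H)\le\max\{1,\Delta_2(\mc G)\}$), Theorem~\ref{thm:main}, and the same integral estimate (the paper isolates it as Lemma~\ref{lemma:ecintegral}). Your explicit reduction to $q\ge D$, and the resulting observation that $(\rho-k)D=k(q-D)\ge 0$, is a small but worthwhile clarification the paper leaves implicit.
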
 

For the graph case $k = 2$, if $2D \mid n$ and $q = (1+o(1))D$, then the bound in Theorem \ref{thm:colorings} is asymptotically tight. By the theorem, every $D$-regular graph on $n$ vertices admits at most $((1+o(1))D/e^2)^{Dn/2}$ proper $q$-edge-colorings. A matching lower bound is realized by the graph $G$ consisting of a disjoint union of $n/(2D)$ copies of $K_{D,D}$. Proper $D$-edge-colorings of $K_{D,D}$ correspond to order-$D$ Latin squares. By a result of Egorychev \cite{E81} and, independently, Falikman \cite{F81}, each copy of $K_{D,D}$ has at least $((1-o(1))D/e^2)^{D^2}$ proper $D$-edge-colorings. Since $q = (1 + o(1))D$, there are therefore at least $((1-o(1))D/e^2)^{Dn/2}$ proper $q$-edge-colorings. This draws an interesting parallel to the Upper Matching Conjecture \cite{FK08}, which states that for any $n$, $k$, and $D$ satisfying $2D \mid n$, the $n$-vertex $D$-regular graph which maximizes the number of matchings of size $k$ is the disjoint union of $n/(2D)$ $K_{D,D}$'s. Davies, Jenssen, and Perkins \cite{DJP21} recently confirmed the conjecture asymptotically. In light of this, it would be interesting to determine which $n$-vertex $D$-regular graph has the maximum number of proper $q$-edge-colorings for $q \ge D$, and in particular whether this maximum is also achieved by the disjoint union of $K_{D,D}$'s. Applying Theorem \ref{thm:colorings} with $\mathcal G = K_{D,D}$, we also get that the number of order-$D$ Latin squares is at most $((1\pm o(1))D/e^2)^{D^2}$, which recovers the upper bound of van Lint and Wilson \cite{VLW01}.

In Section \ref{sec:entropy}, we present the basic entropy tools used in our analysis. Section \ref{sec:proofmain} contains the proof of Theorem~\ref{thm:main}, which bounds the number of perfect matchings in bipartite hypergraphs. In Section \ref{sec:app}, we apply this result to prove Theorem \ref{thm:transversals} and Theorem \ref{thm:colorings}, concerning Latin squares with few transversals and upper bounds for hypergraph colorings.

\section{The Entropy Method}\label{sec:entropy}
Here we present a basic overview of the entropy method, a probabilistic approach to counting via the Shannon entropy function $H(X)$. This is a concept from information theory, an introduction to which can be found in the textbook by Cover and Thomas \cite{CT06}. In the field of combinatorial design theory, entropy has been used to prove upper bounds on the count of various objects \cite{L17, SIM23, K18, LL13, K20, CR11}. For instance, Luria \cite{L17} used entropy to prove an asymptotic upper bound on the number of perfect matchings in a $d$-regular, $k$-uniform hypergraph. As a corollary, this extends to bounds on the number of $n$-queens configurations and Steiner systems, among others. In \cite{SIM23}, Simkin improved the upper bound on the number of $n$-queens configurations by combining entropy with limit objects. 

Let $X$ be a discrete random variable with support $\supp X$, and let $p(x) := \prob{X = x}$. Define the \textit{entropy} $H(X)$ of $X$ to be the quantity
\[
    H(X) = \sum_{x \in \supp X} -p(x)\log_2 p(x).
\]
Intuitively, $H(X)$ captures the amount of surprise information or uncertainty of $X$. In other words, $H(X)$ measures the degree of randomness of $X$. Hence when $X$ is uniform (very random), $H(X)$ should be large, and when $X$ is constant then $H(X) = 0$. This intuition is captured in the following lemma, which we reference as the \textit{uniform bound}.
\begin{lemma}[Uniform Bound]\label{lem:uniform}
    Let $X$ be a discrete random variable. Then $H(X) \leq \log_2|\mathrm{supp}X|$, with equality if and only if $X$ is uniformly distributed.
\end{lemma}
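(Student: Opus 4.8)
The plan is to deduce both the inequality and the equality case from the (strict) concavity of the logarithm via Jensen's inequality. Write $n = |\mathrm{supp}\,X|$. The key observation is that $H(X) = \sum_{x \in \mathrm{supp}\,X} p(x)\log_2(1/p(x))$ is a convex combination --- with weights $p(x)$, which are nonnegative and sum to $1$ --- of the values $1/p(x)$ evaluated under $\log_2$. Since $t \mapsto \log_2 t$ is concave on $(0,\infty)$, Jensen's inequality gives
\[
    H(X) = \sum_{x \in \mathrm{supp}\,X} p(x)\log_2\frac{1}{p(x)} \le \log_2\!\left(\sum_{x \in \mathrm{supp}\,X} p(x)\cdot\frac{1}{p(x)}\right) = \log_2 n,
\]
which is exactly the asserted bound $H(X) \le \log_2|\mathrm{supp}\,X|$.

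For the equality characterization I would appeal to the fact that $\log_2$ is \emph{strictly} concave, so equality holds in the Jensen step above if and only if the quantity being averaged is constant across the support, i.e.\ $1/p(x)$ is the same for every $x \in \mathrm{supp}\,X$; since the $p(x)$ sum to $1$ over $n$ points, this forces $p(x) = 1/n$ for all such $x$, meaning $X$ is uniform on its support. The converse is an immediate computation: if $p(x) = 1/n$ on an $n$-point support, then $H(X) = n \cdot (1/n)\log_2 n = \log_2 n$.

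I expect no genuine obstacle here --- this is a textbook fact --- so the only point meriting care is the equality case, which relies on the \emph{strictness} of the concavity of $\log_2$ (equivalently, strict convexity of $t \mapsto t\log t$); one should be sure to state this rather than merely the weak concavity used for the inequality itself. As an alternative I could instead invoke nonnegativity of the Kullback--Leibler divergence $D(p \,\|\, u)$ between the law of $X$ and the uniform law $u$ on $\mathrm{supp}\,X$: expanding, $D(p\,\|\,u) = -H(X) + \log_2 n \ge 0$ with equality iff $p = u$, which yields the lemma in one line; but the Jensen argument is more self-contained given what has been introduced so far.
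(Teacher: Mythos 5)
Your proof is correct and is the standard textbook argument; the paper itself does not prove Lemma~\ref{lem:uniform} but states it as a known fact, citing Cover and Thomas \cite{CT06}. Your Jensen-based derivation (with the careful remark that the equality case requires \emph{strict} concavity of $\log_2$), as well as the alternative via nonnegativity of Kullback--Leibler divergence, are both exactly the arguments one finds there, so there is nothing to reconcile with the paper.
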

\noindent For jointly distributed random variables $X$ and $Y$, define the \textit{joint entropy} of $X$ and $Y$ as
\[
    H(X,Y) = \sum_{x \in \supp X, y \in \supp Y} -p(x,y)\log_2p(x,y),
\]
and define the \textit{conditional entropy} of $X$ given $Y$ as
\[
    H(X \mid Y) = \sum_{y \in \supp Y}p(y) \sum_{x \in \supp X} - p(x \mid y) \log_2 p(x \mid y).
\]
The quantity $H(X \mid Y)$ captures the amount of surprise information of $X$ given that $Y$ has already been revealed. 

A fundamental identity related to conditional entropy is the \textit{chain rule}, which describes how the entropy of a random vector can be analyzed by uncovering its components sequentially, one at a time. For example, to analyze a random Latin square using entropy, we can think of its entries as forming a random vector of $n^2$ elements, then use the chain rule to reveal the square entry-wise.
\begin{lemma}[Chain Rule]\label{lem:chain}
    For random variables $X_1, \dots, X_n$,
    \[
        H(X_1, \dots, X_n) = \sum_{i = 1}^n H(X_i \mid X_1, \dots, X_{i-1}).
    \]
\end{lemma}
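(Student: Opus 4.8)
The plan is to first establish the two-variable case $H(X,Y) = H(X) + H(Y \mid X)$ and then obtain the general statement by a straightforward induction on $n$. For the two-variable case I would start from the factorization $p(x,y) = p(x)\,p(y \mid x)$, which is valid for every pair $(x,y)$ in the support of $(X,Y)$: for such pairs $p(x) > 0$, so the conditional probability is well-defined, while pairs with $p(x,y) = 0$ contribute nothing under the convention $0\log_2 0 = 0$. Taking logarithms gives $\log_2 p(x,y) = \log_2 p(x) + \log_2 p(y \mid x)$, and substituting into the definition of $H(X,Y) = -\sum_{x,y} p(x,y)\log_2 p(x,y)$ splits the sum into two pieces, all sums being understood over the relevant supports.

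For the first piece, $-\sum_{x,y} p(x,y)\log_2 p(x)$, I would sum out $y$ using $\sum_y p(x,y) = p(x)$ to recover $-\sum_x p(x)\log_2 p(x) = H(X)$. For the second piece, $-\sum_{x,y} p(x,y)\log_2 p(y \mid x)$, I would rewrite $p(x,y) = p(x)\,p(y \mid x)$ and reorganize the double sum as $\sum_x p(x)\bigl(-\sum_y p(y \mid x)\log_2 p(y \mid x)\bigr)$, which is exactly $H(Y \mid X)$ by definition. Adding the two pieces yields $H(X,Y) = H(X) + H(Y \mid X)$.

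To finish, I would induct on $n$. The base case $n = 1$ is the tautology $H(X_1) = H(X_1)$. For the inductive step, I would apply the two-variable identity with the random vector $(X_1,\dots,X_{n-1})$ playing the role of $X$ and $X_n$ the role of $Y$, giving $H(X_1,\dots,X_n) = H(X_1,\dots,X_{n-1}) + H(X_n \mid X_1,\dots,X_{n-1})$; the induction hypothesis then expands $H(X_1,\dots,X_{n-1})$ into $\sum_{i=1}^{n-1} H(X_i \mid X_1,\dots,X_{i-1})$, and combining gives the claimed formula.

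The only real care needed is bookkeeping around zero-probability events and the order of summation; there is no serious obstacle, since the identity is purely algebraic once the conditional-probability factorization and the convention $0\log_2 0 = 0$ are in place. One could, if desired, instead run the induction directly on the conditional chain rule $H(X,Y \mid Z) = H(X \mid Z) + H(Y \mid X, Z)$ (proved by the same computation with everything conditioned on $Z$), but for the stated lemma the unconditional two-variable case plus induction suffices.
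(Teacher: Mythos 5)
Your proof is correct: the two-variable identity via the factorization $p(x,y) = p(x)\,p(y\mid x)$, followed by induction, is the standard argument, and your handling of zero-probability terms and the reorganization of the double sum into $\sum_x p(x)\bigl(-\sum_y p(y\mid x)\log_2 p(y\mid x)\bigr)$ is exactly right. The paper states this chain rule as a standard fact from information theory (citing Cover and Thomas) without proof, and your argument is precisely the textbook proof it relies on.
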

For a broader introduction and additional applications of the entropy method in combinatorics, see Chapter 15 of Alon-Spencer \cite{AS16}.

\section{Proof of the main theorem}\label{sec:proofmain}

In this section, we prove Theorem~\ref{thm:main} using the entropy method.  In the proof, we choose an $A$-perfect matching $X$ of $\mc H$ uniformly at random and bound the entropy $H(X)$ using the chain rule (Lemma~\ref{lem:chain}), by revealing the edges of $X$ one-by-one, in a uniformly random order.  If $\mc H$ is a bipartite hypergraph with bipartition $(A, B)$ and $X$ is an $A$-perfect matching of $\mc H$, we let $X_a$ denote the edge of $X$ containing $a$.
As we reveal the edges, we should expect the conditional entropy to decrease, as for each $a \in A$, there will be fewer possible values for $X_a$. 
This part of the argument is captured by the following lemma.  

\begin{lemma}\label{lemma:entropy-in-main-thm}
    Let $\mc H$ be a bipartite hypergraph with bipartition $(A,B)$. If $X$ is a random $A$-perfect matching of $\mc H$, then
    \[
        H(X)\le \expectc{X}{|A|\int_0^1\log_2\paren{\frac1{|A|}\sum_{a\in A}\sum_{e\ni a}x^{|\{a'\neq a:X_{a'} \cap e\neq\emptyset\}|}}\mathrm dx}.
    \]
\end{lemma}

\begin{proof}
Let $\alpha = (\alpha_a)_{a\in A}\in[0,1]^{|A|}$ be a vector of real numbers where for each $a\in A$, $\alpha_a$ is chosen from $[0,1]$ uniformly and independently at random. Since $\mc H$ is bipartite, we can write $X$ as the joint distribution $X = (X_a)_{a \in A}$.
We proceed by revealing each $X_a$ in increasing order of $\alpha_a$. As $H(X)$ is independent of $\alpha$, by the chain rule (Lemma \ref{lem:chain}) we have
\[
    H(X)=\expectc{\alpha}{H(X)}=\expectc{\alpha}{\sum_{a \in A}H(X_a \mid X_{a'} : \alpha_{a'} < \alpha_a)}.
\]
For each $a \in A$ and $e \ni a$, we say the assignment $X_a = e$ is \textit{legal} if every $v \in e\cap B$ satisfies
\[
    v \notin \bigcup_{a' : \alpha_{a'} < \alpha_a}X_{a'}.
\]
For each $a \in A$, let $N_a(X, \alpha)$ be the number of legal choices for $X_a$. Since $X_a$ is contained in the set of legal edges, and this set depends only on $X_{a'}$ for $a' \in A$ with $\alpha_{a'}<\alpha_a$, by the uniform bound, for every $\alpha$,
\[
    H(X_a\mid X_{a'} : \alpha_{a'}<\alpha_a)\le \expectc{X}{\log_2 N_a(X,\alpha)}.
\]
Combining the previous inequalities, by the linearity of expectation and Fubini's theorem, we have
\begin{equation*}
    H(X) \leq \expectc{\alpha}{\sum_{a \in A}\expectc{X}{\log_2 N_a(X,\alpha)}} = 
    \expectc{X}{\sum_{a \in A}\expectc{\alpha}{\log_2 N_a(X,\alpha)}}.
\end{equation*}
For each vertex $a \in A$, we now integrate to condition on a fixed evaluation of $\alpha_a$.
By the law of total expectation, for every $X$,
\begin{equation*}
    \expectc{\alpha}{\log_2N_a(X,\alpha)} = \int_0^1\log_2\expectc{\alpha\mid\alpha_a=x}{N_a(X,\alpha)}\mathrm dx,
\end{equation*}
and for every $x \in [0,1]$, we have
\begin{equation*}
    \log_2\expectc{\alpha\mid\alpha_a=x}{N_a(X,\alpha)} = \log_2\paren{\sum_{e \ni a}\PP_{\alpha\mid\alpha_a=x}[X_a = e\text{ is legal}]}
\end{equation*}
by the linearity of expectation. Given $\alpha_a = x$, the probability that the assignment $X_a = e$ is legal is the probability that $\alpha_{a'} > \alpha_a$ for every $a'$ such that $X_{a'}\cap e\neq\emptyset$. Therefore, for every $a \in A$ and $x \in [0,1]$, we have 
\[  
    \PP_{\alpha\mid\alpha_a=x}[X_a = e\text{ is legal}] = (1-x)^{|\{a'\neq a:X_{a'} \cap e\neq\emptyset\}|}.
\]
Altogether, substituting $1-x\mapsto x$ in the integral, we have
\begin{equation*}
    H(X) \leq \expectc{X}{\sum_{a \in A}\int_0^1\log_2\left(\sum_{e \ni a}x^{|\{a'\neq a : X_{a'}\cap e \neq\emptyset\}|}\right)\mathrm dx},
\end{equation*}
and the result follows 
by Jensen's inequality.
\end{proof}

In the proof of Theorem~\ref{thm:main}, we will use that, for ``most'' edges $e$, the quantity $|\{a' \neq a : X_{a'}\cap e\neq\emptyset\}|$ in Lemma~\ref{lemma:entropy-in-main-thm} is equal to $k$. 
This equality holds for \textit{all} edges when $\Delta_2(\mathcal H) = 1$ and $\rho = k$ (and thus $X$ is a perfect matching), but it does not hold in general.
In particular, an edge $e \notin X$ can be incident with fewer than $k$ edges in $X$ in one of two ways: either an edge in $X$ contains multiple vertices of $e$ in $B$, or $e$ contains vertices not saturated by $X$. 
To that end, we define the following sets of ``bad edges'' (shown in Figure \ref{fig:badedges}).

\begin{figure}
    \centering
    \resizebox{0.25\textwidth}{!}{
    \begin{tikzpicture}
        \draw[line width=2pt, rounded corners] (0,1) rectangle (1,6);
        \draw[line width=2pt, rounded corners] (2,0) rectangle (4,6);

        \node at (0.5, 6.4) {\Large $A$};
        \node at (3, 6.4) {\Large $B$};
        
        \foreach \i in {2,...,6} {
            \node[fill=black, circle, inner sep=2pt] (L\i) at (0.5, \i - 0.5) {};
        }
        \foreach \i in {1,...,6} {
            \node[fill=black, circle, inner sep=2pt] (R\i) at (2.5, \i - 0.5) {};
            \node[fill=black, circle, inner sep=2pt] (RR\i) at (3.5, \i - 0.5) {};
        }

        \begin{scope}[on background layer] 
            
            \foreach \i in {2,...,6} {
                \filldraw[fill=gray, fill opacity=0.3, draw=black, draw opacity=0.8, line width=1.5pt, rounded corners=5pt]
                ($(L\i) + (-0.3, -0.3)$) --
                ($(L\i) + (-0.3, 0.3)$) --
                ($(RR\i) + (0.3, 0.3)$) --
                ($(RR\i) + (0.3, -0.3)$) --
                cycle;
            }

            \filldraw[fill=blue, fill opacity=0.4, draw=blue, draw opacity=0.8, line width=1.5pt, rounded corners=5pt]
                ($(L5) + (0.07082, 0.3)$) --
                ($(R4) + (0.07082, 0.3)$) --
                ($(RR4) + (0.3, 0.3)$) --
                ($(RR4) + (0.3, -0.3)$) --
                ($(R4) + (-0.07082, -0.3)$) --
                ($(L5) + (0, -0.3)$) --
                ($(L5) + (-0.3, -0.3)$) --
                ($(L5) + (-0.3, 0.3)$) --
                cycle;

            \filldraw[fill=red, fill opacity=0.4, draw=red, draw opacity=0.8, line width=1.5pt, rounded corners=5pt]
                ($(L2) + (0.07082, 0.3)$) --
                ($(R1) + (0.07082, 0.3)$) --
                ($(RR1) + (0.3, 0.3)$) --
                ($(RR1) + (0.3, -0.3)$) --
                ($(R1) + (-0.07082, -0.3)$) --
                ($(L2) + (0, -0.3)$) --
                ($(L2) + (-0.3, -0.3)$) --
                ($(L2) + (-0.3, 0.3)$) --
                cycle;
        \end{scope}
    \end{tikzpicture}
    }
    \caption{Edges in $X$, $S(X)$, and $T(X)$ are\\ shown in black, blue, and red, respectively.}
    \label{fig:badedges}
\end{figure}
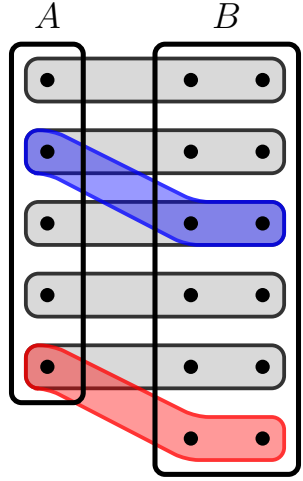

\begin{define}
    Let $\mc H$ be a bipartite hypergraph with bipartition $(A,B)$.  For an $A$-perfect matching $X$ of $\mc H$, we define
    \begin{itemize}[topsep=.3em, itemsep=0.3em]
        \item $S(X) := \{e \in E(\mc H) : \exists a \in A\text{ s.t. } e \neq X_a \text{ and } |X_a \cap e \cap B| \ge 2\}$, and
        \item $T(X) := \{e \in E(\mc H) : \exists v \in e \text{ s.t. } \forall a \in A, v \notin X_a\}$.
    \end{itemize}
    For each $a \in A$ let $S_a(X) := \{e \in S(X) : e \ni a\}$ and $T_a(X) := \{e \in T(X) : e \ni a\}$. 
 \end{define}

Nevertheless, we can show that under the hypotheses of Theorem~\ref{thm:main}, these sets of bad edges are not too large, as follows.

\begin{lemma}\label{lemma:bad-edge-bound}
    Let $\mc H$ be a $(k + 1)$-uniform bipartite hypergraph with bipartition $(A,B)$.  If $X$ is an $A$-perfect matching of $\mc H$, then
    \begin{equation*}
        \sum_{a\in A}|S_a(X)| = |S(X)| \leq |A|\binom{k}{2}(\Delta_2(\mc H) - 1),
    \end{equation*}
    and if every vertex of $B$ has degree at most $D$, then
    \begin{equation*}
        \sum_{a\in A}|T_a(X)| = |T(X)| \leq (|B| - k|A|)D.
    \end{equation*}
\end{lemma}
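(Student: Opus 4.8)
First I would verify that the two displayed equalities (the identities $\sum_a|S_a(X)| = |S(X)|$ and $\sum_a|T_a(X)| = |T(X)|$) hold simply because $\mathcal H$ is bipartite with bipartition $(A,B)$: every edge contains exactly one vertex of $A$, so the sets $S_a(X)$ (respectively $T_a(X)$) partition $S(X)$ (respectively $T(X)$) as $a$ ranges over $A$. That reduces both claims to bounding the total sizes $|S(X)|$ and $|T(X)|$.

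For the bound on $|S(X)|$, the plan is to charge each edge $e \in S(X)$ to a witnessing pair: by definition there is some $a \in A$ with $e \neq X_a$ and $|X_a \cap e \cap B| \ge 2$, so we may pick two vertices $u, v \in X_a \cap e \cap B$. I would count edges $e \in S(X)$ by first fixing the matching edge $f = X_a \in X$, then fixing an unordered pair $\{u,v\}$ of $B$-vertices inside $f$, and finally counting the edges $e \ne f$ with $\{u,v\} \subseteq e$. Since $f$ is $(k+1)$-uniform with exactly one vertex in $A$, it has exactly $k$ vertices in $B$, giving $\binom{k}{2}$ choices of $\{u,v\}$; and the number of edges through both $u$ and $v$ is at most $\Delta_2(\mathcal H)$, so the number of such $e \ne f$ is at most $\Delta_2(\mathcal H) - 1$. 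Summing over the $|X| = |A|$ edges $f \in X$ gives $|S(X)| \le |A|\binom{k}{2}(\Delta_2(\mathcal H) - 1)$. The one point requiring a sentence of care is that this procedure counts each $e \in S(X)$ at least once (it may count it multiple times, with different $(a, \{u,v\})$, which only helps the upper bound), so the inequality goes the right way.

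For the bound on $|T(X)|$, I would charge each $e \in T(X)$ to a vertex $v \in e$ that is unsaturated by $X$ (i.e.\ $v \notin X_a$ for all $a \in A$). Such a $v$ must lie in $B$, since $X$ is $A$-perfect and hence saturates every vertex of $A$. The number of vertices of $B$ left unsaturated by $X$ is $|B| - k|A|$: the matching $X$ has $|A|$ edges, each $(k+1)$-uniform with one vertex in $A$, hence $k$ vertices in $B$, and these $B$-vertices are distinct across edges since $X$ is a matching. Each unsaturated $v \in B$ has degree at most $D$, so it lies in at most $D$ edges; summing over all unsaturated $v$ bounds the number of edges $e$ that have at least one unsaturated vertex, which is exactly $|T(X)|$, by $(|B| - k|A|)D$. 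Again each $e \in T(X)$ is counted at least once (possibly more, if it has several unsaturated vertices), so the inequality is in the correct direction.

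I do not expect a genuine obstacle here: both parts are double-counting arguments, and the main thing to get right is the bookkeeping that in a $(k+1)$-uniform bipartite hypergraph each edge and each matching edge has exactly $k$ vertices on the $B$ side, together with noting in each case that the charging map is onto $S(X)$ (resp.\ $T(X)$) so overcounting only strengthens the bound. If anything is delicate, it is the implicit use of $|B| \ge k|A|$ for the second bound to be meaningful, but this is automatic whenever an $A$-perfect matching exists, which is the only case in which the statement is used.
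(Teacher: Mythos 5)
Your proof is correct, and both parts use the same double-counting (charging) arguments as the paper, so the two are essentially the same in spirit. One small point in your favor concerns the $S$-bound: the paper phrases it as a per-vertex estimate $|S_a(X)| \le \binom{k}{2}(\Delta_2(\mathcal H)-1)$, justified by the assertion that every $e \in S_a(X)$ contains a pair of $B$-vertices of $X_a$. That assertion is not quite right, since the vertex $a'$ witnessing $e \in S(X)$ (the one with $|X_{a'} \cap e \cap B| \ge 2$) need not equal $a$, and one can in fact construct examples where $|S_a(X)|$ exceeds $\binom{k}{2}(\Delta_2(\mathcal H)-1)$ for a particular $a$. Your version sidesteps this by charging each $e \in S(X)$ to a witnessing matching edge $X_a$ together with a $B$-pair inside it and only then summing over the $|A|$ matching edges; this directly yields the aggregate bound $|S(X)| \le |A|\binom{k}{2}(\Delta_2(\mathcal H)-1)$, which is precisely what the lemma asserts and what the proof of Theorem~\ref{thm:main} uses.
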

\begin{proof}
    To bound $|S_a(X)|$, fix some $a \in A$, and let $e$ be the edge in $X$ containing $a$. Since $e$ is $(k + 1)$ uniform, there are $k$ vertices in $e \cap B$, which determine $\binom k2$ unordered pairs. Each such pair is contained in at most $\Delta_2(\mc H)$ edges of $\mc H$, one of which is $e$ itself. By definition, each edge in $S_a(X)$ must contain at least one such pair, and hence $|S_a(X)| \le \binom k2(\Delta_2(\mc H) - 1)$.

    To bound $|T(X)|$, note that $X$ covers exactly $k|A|$ vertices in $B$, so $|B| - k|A|$ vertices of $B$ remain uncovered. Each uncovered vertex lies in at most $D$ edges of $\mc H$, and every edge in $T(X)$ contains at least one such uncovered vertex. Therefore, $|T(X)| \le (|B| - k|A|)D$.
\end{proof}

Now we can prove Theorem~\ref{thm:main}.

\begin{proof}[Proof of Theorem~\ref{thm:main}]
Let $\mc M$ be the set of all $A$-perfect matchings of $\mc H$, and choose some $X \in \mc M$ uniformly at random. By the uniform bound (Lemma \ref{lem:uniform}), $\log_2|\mc M| = H(X)$, so it suffices to upper-bound $H(X)$. By Lemma~\ref{lemma:entropy-in-main-thm},
\[
    H(X)\le \expectc{X}{|A|\int_0^1\log_2\paren{\frac1{|A|}\sum_{a\in A}\sum_{e\ni a}x^{|\{a'\neq a:X_{a'} \cap e\neq\emptyset\}|}}\mathrm dx}.
\]
Now fix some $a \in A$ of degree $\deg(a)$ and consider some arbitrary edge $e$ containing $a$. If $e = X_a$ or $e \in S_a(X) \cup T_a(X)$, we upper-bound the term $x^{|\{a'\neq a:X_{a'} \cap e\neq\emptyset\}|}$ in the summation by $1$. Otherwise, if $e\notin \{X_a\}\cup S_a(X)\cup T_a(X)$, then $|\{a'\neq a : X_{a'}\cap e\neq\emptyset\}| = k$. It follows that for every $a \in A$ and $x \in [0,1]$, 
\[
    \sum_{e \ni a}x^{|\{a'\neq a:X_{a'} \cap e\neq\emptyset\}|} \le 1 + |S_a(X)| + |T_a(X)| + \deg(a)x^k.
\]
Since $\Delta_2(\mc H) = o(q)$ and every vertex of $B$ has degree at most $D$, by Lemma~\ref{lemma:bad-edge-bound}, we have $|S(X)| \le |A|\binom k2(\Delta_2(\mc H)-1) = |A|o(q)$ and $|T(X)| \le |A|(\rho - k)D$. Therefore,
\begin{align*}
    H(X) &\le \expectc{X}{|A|\int_0^1 \log_2\paren{1 + \frac1{|A|}\sum_{a \in A}|S_a(X)| + \frac1{|A|}\sum_{a \in A}|T_a(X)| + \frac1{|A|}\sum_{a \in A}\deg(a)x^k}\mathrm dx}\\
    &= \expectc{X}{|A|\int_0^1 \log_2\paren{1 + o(q) + (\rho - k)D + qx^k}\mathrm dx}
    \\ &= \frac{|A|}{\log2}\int_0^1\log\paren{(\rho - k)D + q(o(1) + x^k)}\mathrm dx.
\end{align*}
Raising this to the power of $2$ gives that the number of $A$-perfect matchings of $\mathcal H$ is at most
\[
    |\mc M| \le \exp\paren{|A|\int_0^1\log\paren{(\rho - k)D + q(o(1) + x^k)}\mathrm dx},
\]
as desired.
\end{proof}

\section{Applications}\label{sec:app}

In this section, we use Theorem~\ref{thm:main} to prove Theorems \ref{thm:transversals} and \ref{thm:colorings}. Both applications involve evaluating integrals of the form appearing in the exponent of Theorem \ref{thm:main}, the calculations for which are isolated in the following lemmas.

\begin{lemma}\label{lemma:logintegral}
    For $k \ge 2$ constant and $\eps\to0$, 
    \begin{equation*}
        \int_0^1\log(\eps+x^k)\mathrm dx = -k + O(\eps^{1/k}).
    \end{equation*}
\end{lemma}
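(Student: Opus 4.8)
The plan is to separate the leading term $-k$ exactly and then control the perturbation caused by $\eps$. Since $\int_0^1 \log x\,\mathrm dx = -1$, we have $\int_0^1 \log(x^k)\,\mathrm dx = -k$, so it suffices to show
\[
    \int_0^1 \log(\eps + x^k)\,\mathrm dx - \int_0^1 \log(x^k)\,\mathrm dx = \int_0^1 \log\left(1 + \frac{\eps}{x^k}\right)\mathrm dx = O(\eps^{1/k}).
\]
Both integrals here converge because $\log$ is integrable near $0$.

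The key step for obtaining the sharp exponent $1/k$ (rather than, say, $\eps^{1/k}\log(1/\eps)$) is the change of variables $x = \eps^{1/k}u$, under which $\eps/x^k = u^{-k}$ and the error integral becomes
\[
    \int_0^1 \log\left(1 + \frac{\eps}{x^k}\right)\mathrm dx = \eps^{1/k}\int_0^{\eps^{-1/k}} \log(1 + u^{-k})\,\mathrm du.
\]
I would then verify that $C_k := \int_0^\infty \log(1 + u^{-k})\,\mathrm du$ is a finite constant: near $u = 0$ one writes $\log(1 + u^{-k}) = -k\log u + \log(1 + u^k) = O(|\log u|)$, which is integrable, and near $u = \infty$ one uses $\log(1 + u^{-k}) \le u^{-k}$, which is integrable since $k \ge 2 > 1$. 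The same pointwise bound controls the tail:
\[
    \int_{\eps^{-1/k}}^\infty \log(1 + u^{-k})\,\mathrm du \le \int_{\eps^{-1/k}}^\infty u^{-k}\,\mathrm du = \frac{\eps^{(k-1)/k}}{k-1}.
\]
Combining these, the error integral equals $\eps^{1/k}\left(C_k - O(\eps^{(k-1)/k})\right) = C_k\,\eps^{1/k} + O(\eps)$, which gives $\int_0^1\log(\eps+x^k)\,\mathrm dx = -k + O(\eps^{1/k})$. If the explicit constant were wanted, integration by parts yields $C_k = k\int_0^\infty (1+u^k)^{-1}\,\mathrm du = \pi/\sin(\pi/k)$, but only finiteness is used.

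I expect the only genuine (and mild) obstacle to be resisting the naive route: bounding $\log(1 + \eps x^{-k})$ pointwise by $-k\log x$ on $[0,\eps^{1/k}]$ and by $\eps x^{-k}$ on $[\eps^{1/k},1]$ and integrating term by term loses a factor of $\log(1/\eps)$, producing only $O(\eps^{1/k}\log(1/\eps))$. The rescaling $x = \eps^{1/k}u$ is precisely what removes this spurious logarithm, because after the substitution the integrand is bounded (of size $\log 2$) near the relevant endpoint $u \approx 1$ instead of blowing up like $\log(1/x)$. Everything else is a routine endpoint analysis of an improper integral.
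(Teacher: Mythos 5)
Your proof is correct and follows essentially the same approach as the paper: both split off $\int_0^1 \log(x^k)\,\mathrm dx = -k$ and then control $\int_0^1 \log(1+\eps/x^k)\,\mathrm dx$ via the rescaling $x = \eps^{1/k}t$. The only difference is organizational — the paper splits the error integral at $x = \eps^{1/k}$, rescales on $[0,\eps^{1/k}]$ and bounds the remainder via $\ln(1+a)\le a$, whereas you rescale over all of $[0,1]$ and compare to the convergent improper integral $C_k = \int_0^\infty \log(1+u^{-k})\,\mathrm du$ with a tail estimate; both yield $O(\eps^{1/k})$.
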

\begin{proof}
    First, note that
    \begin{align*}
        \int_0^1\log(\eps+x^k)\mathrm dx &= \int_0^1\log(x^k)\mathrm dx + \int_0^1 \log\paren{1 + \frac{\eps}{x^k}}\mathrm dx\\
        &= -k + \int_0^1\log\paren{1 + \frac{\eps}{x^k}}\mathrm dx\\
        &= -k + \int_0^{\eps^{1/k}}\log\paren{1 + \frac{\eps}{x^k}}\mathrm dx + \int_{\eps^{1/k}}^1\log\paren{1 + \frac{\eps}{x^k}}\mathrm dx. 
    \end{align*}

    Substituting $x = \eps^{1/k}t$ in the first integral above, we have
    \[
        \int_0^{\eps^{1/k}} \log\paren{1 + \frac{\eps}{x^k}}\mathrm dx = \eps^{1/k}\int_0^1\log\paren{1 + t^{-k}}\mathrm dt = O(\eps^{1/k}),
    \]
    since $\int_0^1\log(1+t^{-k})\mathrm dt$ is a positive constant depending only on $k$. Using the inequality $\ln(1+a)\le a$ for $a \ge 0$, we have
    \[
        \int_{\eps^{1/k}}^1 \log\paren{1 + \frac{\eps}{x^k}}\mathrm dx \le \int_{\eps^{1/k}}^1 \frac{\eps}{x^k}\mathrm dx = \frac{1}{k-1}\paren{\eps^{1/k} - \eps} = O(\eps^{1/k}).
    \]

    The desired result follows by combining the three equations above.
\end{proof}

\begin{lemma}\label{lemma:lsintegral}
    For $n$ sufficiently large, \begin{equation*}
    \int_0^1\log\paren{\paren{\frac89n+\frac13}(o(1)+x^2)}\mathrm dx \le \log n - 2.117.
    \end{equation*}
\end{lemma}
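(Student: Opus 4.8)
The plan is to split the logarithm of the product and handle each piece separately, reducing everything to Lemma~\ref{lemma:logintegral}. Writing the integrand as $\log\bigl(\tfrac89 n + \tfrac13\bigr) + \log(o(1) + x^2)$, the first term is constant in $x$ and contributes exactly $\log\bigl(\tfrac89 n + \tfrac13\bigr)$, while the second term is exactly the integral treated in Lemma~\ref{lemma:logintegral} with $k = 2$ and $\eps = o(1)$.

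For the first term, I would factor out $n$: $\log\bigl(\tfrac89 n + \tfrac13\bigr) = \log n + \log\bigl(\tfrac89 + \tfrac1{3n}\bigr) = \log n + \log\tfrac89 + o(1)$, since $\log$ is continuous at $\tfrac89$. For the second term, Lemma~\ref{lemma:logintegral} gives $\int_0^1 \log(o(1) + x^2)\,dx = -2 + O\bigl((o(1))^{1/2}\bigr) = -2 + o(1)$. Adding the two contributions yields
\[
    \int_0^1\log\Bigl(\bigl(\tfrac89 n + \tfrac13\bigr)(o(1) + x^2)\Bigr)\,dx = \log n + \log\tfrac89 - 2 + o(1).
\]

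It remains to estimate the numerical constant: $\log\tfrac89 = 3\log 2 - 2\log 3 = -0.11778\ldots$, so $\log\tfrac89 - 2 = -2.11778\ldots < -2.117$. Since $-2.11778\ldots$ is strictly less than $-2.117$ with a gap of about $8\times 10^{-4}$, for $n$ sufficiently large the $o(1)$ error term is smaller than this gap, and the claimed bound $\log n - 2.117$ follows. I do not anticipate a genuine obstacle here; the only point requiring a little care is tracking that the $o(1)$ terms (one from approximating $\tfrac89 + \tfrac1{3n}$, one from the $O(\eps^{1/k})$ in Lemma~\ref{lemma:logintegral}) are both genuinely vanishing as $n \to \infty$ and hence can be absorbed into the slack between $2.11778\ldots$ and $2.117$.
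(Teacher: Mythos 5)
Your proof is correct and follows essentially the same approach as the paper: split the logarithm into $\log(\tfrac89 n + \tfrac13) + \log(o(1)+x^2)$, apply Lemma~\ref{lemma:logintegral} with $k=2$ to the second integral, and absorb the $o(1)$ errors into the slack between $\log(8/9)-2 \approx -2.1178$ and $-2.117$. The only (minor) difference is that you make explicit the numerical check that $\log\tfrac89 - 2 < -2.117$, which the paper leaves implicit.
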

\begin{proof}
    By Lemma \ref{lemma:logintegral},
    \begin{align*}
    \int_0^1\log\paren{\paren{\frac89n+\frac13}(o(1)+x^2)}\mathrm dx &= \log\paren{\frac89n+\frac13} + \int_0^1\log\paren{o(1)+x^2}\mathrm dx\\
    &= \log n + \log(8/9) + o(1) - 2 + o(1) \\ &= \log n + \log(8/9) - 2 + o(1) \le \log n - 2.117.
       \qedhere
    \end{align*}
\end{proof}

\begin{lemma}\label{lemma:ecintegral}
    For $k \ge 2$ constant and $q = (1 + o(1))D$ as $q\to\infty$,
    \begin{equation*}
        \int_0^1 \log\paren{o(1)kD+q(o(1)+x^k)}\mathrm dx = \log q - k + o(1).
    \end{equation*}
\end{lemma}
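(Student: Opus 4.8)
The plan is to reduce Lemma~\ref{lemma:ecintegral} to Lemma~\ref{lemma:logintegral} by pulling the factor $q$ out of the logarithm. Writing $o(1)kD = qr(q)$ where $r(q) = o(1)\cdot k D/q = o(1)$ (using $D = (1+o(1))q$, so $kD/q$ is bounded and the $o(1)$ factor drives the product to $0$), the integrand becomes $\log\bigl(q(r(q) + o(1) + x^k)\bigr) = \log q + \log(\varepsilon + x^k)$ where $\varepsilon := r(q) + o(1) \to 0$. Then $\int_0^1 \log(q(\varepsilon + x^k))\,\mathrm dx = \log q + \int_0^1 \log(\varepsilon + x^k)\,\mathrm dx$, and applying Lemma~\ref{lemma:logintegral} gives $\int_0^1 \log(\varepsilon + x^k)\,\mathrm dx = -k + O(\varepsilon^{1/k}) = -k + o(1)$, which yields exactly $\log q - k + o(1)$.

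The steps in order: first, absorb the $o(1)kD$ term into the $q(\ldots)$ bracket, justifying that the resulting additive perturbation inside the parentheses is still $o(1)$ as $q \to \infty$ — here one uses $q = (1+o(1))D$ to control the ratio $kD/q$. Second, use $\log(ab) = \log a + \log b$ to split off the constant-in-$x$ factor $\log q$ from the integral. Third, invoke Lemma~\ref{lemma:logintegral} with $\varepsilon \to 0$ to evaluate the remaining integral as $-k + O(\varepsilon^{1/k})$, and observe $O(\varepsilon^{1/k}) = o(1)$ since $k$ is constant. Combining these gives the claimed identity.

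I do not anticipate a genuine obstacle here; the only mild subtlety is bookkeeping with the two independent $o(1)$ terms (one multiplying $kD$, one already inside the $x^k$ bracket) and making sure their combination is legitimately $o(1)$ before feeding it into Lemma~\ref{lemma:logintegral} as the parameter $\varepsilon$. One should also note the integrand is $\log$ of a quantity that could be small or negative near $x = 0$; this is harmless because $\int_0^1 \log(\varepsilon + x^k)\,\mathrm dx$ is a convergent (improper) integral handled already in Lemma~\ref{lemma:logintegral}, so the manipulation is valid for any fixed small $\varepsilon > 0$ and we take the limit at the end. The proof is therefore a short two- or three-line computation citing Lemma~\ref{lemma:logintegral}.
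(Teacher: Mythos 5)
Your proof is correct and follows essentially the same route as the paper: absorb the $o(1)kD$ term into the bracket (using $D = (1+o(1))q$ to control $kD/q$), pull $\log q$ out of the integral, and invoke Lemma~\ref{lemma:logintegral}. If anything your version is slightly more careful — the paper's proof passes to an inequality $\le \int_0^1 \log\bigl(q(o(1)+x^k)\bigr)\,\mathrm dx$ at the absorption step (which suffices for the application in Theorem~\ref{thm:colorings}), whereas you track the two $o(1)$ terms explicitly and recover the stated equality.
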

\begin{proof}
    Since $k$ is constant, $k\cdot o(1) = o(1)$. By Lemma \ref{lemma:logintegral},
    \begin{align*}
        \int_0^1 \log\paren{o(1)kD+q(o(1)+x^k)}\mathrm dx  &= \int_0^1\log\paren{o(1)D + q(o(1) + x^k)}\mathrm dx\\
        &= \int_0^1\log\paren{q(o(1)+x^k)}\mathrm dx \\ &= \log q + \int_0^1 \log\paren{o(1)+x^k}\mathrm dx = \log q - k + o(1).
        \qedhere
    \end{align*}
\end{proof}

\subsection{Latin squares with few transversals}\label{subsec:transversal}
\begin{proof}[Proof of Theorem \ref{thm:transversals}]
Recall that an entry of a Latin square is \textit{transversal-free} if it does not appear in any transversal of that Latin square. Choose some sufficiently large integer $n$ with $n \equiv 0 \pmod 3$. When $n$ is even, the Cayley table of $\ZZ/n\ZZ$ is an order-$n$ Latin square containing no transversals, so it suffices to consider the case where $n$ is odd. By a theorem of Egan and Wanless \cite{EW12}, there exists an order-$n$ Latin square $L$ containing an $\paren{\frac n3-1} \times \frac n3$ Latin subrectangle consisting entirely of transversal-free entries. Let $R$, $C$, and $S$ be disjoint sets indexing the rows, columns, and symbols of $L$, respectively. We now construct an auxiliary hypergraph $\mc H$ as follows. Let $V(\mc H) = R \cup C \cup S$ and for each $(r,c,s) \in R \times C \times S$, let $\{r,c,s\} \in E(\mc H)$ if $L_{r,c} = s$ and the entry $(r,c,s) \in L$ is not transversal-free. Note that $\mc H$ is $3$-uniform and bipartite with bipartition $(R, C \cup S)$. Since $L$ is a Latin square, the maximum degree of $\mc H$ is $n$ and the maximum codegree of $\mc H$ is $1$. By our choice of $L$ and the definition of $E(\mc H)$, the average degree of vertices in $R$ is at most
\[
    \frac1n\left[\paren{\frac{2n}{3} + 1}n + \paren{\frac n3 - 1}\frac{2n}{3}\right] = \frac89n + \frac13.
\]
By construction, $E(\mc H)$ omits only the transversal-free entries of $L$, and therefore the property that all transversals of $L$ correspond to $R$-perfect matchings in $\mc H$ is preserved. By Theorem \ref{thm:main} with $A = R, \rho = k = 2$, $D = n$, and $q = \frac89n + \frac13$, we have that the number of transversals of $L$ is at most
\begin{align*}
    \exp\paren{n\int_0^1\log\paren{\paren{\frac89n+\frac13}(o(1) + x^2)}\mathrm dx} \le \paren{\frac{n}{e^{2.117}}}^n,
\end{align*}
where the final inequality holds by Lemma \ref{lemma:lsintegral}.
\end{proof}

The authors remark that the value of $2.117$ could be improved slightly by performing Jensen's inequality at a later stage in the proof of Theorem \ref{thm:main}.

\subsection{Edge colorings of hypergraphs}\label{subsec:coloring}

\begin{proof}[Proof of Theorem \ref{thm:colorings}]
From $\mc G$ we form an \textit{incidence hypergraph} $\mc H$ as follows. Let $V(\mc H) = E(\mc G) \cup (V(\mc G) \times [q])$, and for each $c \in [q]$ and $e \in E(\mc G)$, add the edge $\{e\} \cup (e \times \{c\})$ to $E(\mc H)$. Note that $\mc H$ is bipartite with bipartition $(E(\mc G), V(\mc G) \times [q])$ and $(k + 1)$-uniform. Since $\mc G$ is $D$-regular and $k$-uniform, we have $e(\mc G) = \frac{nD}{k}$, and hence 
\[
    \frac{|V(\mc G) \times [q]|}{|E(\mc G)|} = \frac{qn}{nD/k} = (1+o(1))k.
\]

For every $e \in E(\mc G)$, we have $\deg_{\mc H}(e) = q$, and the maximum degree of any vertex in $V(\mc G)\times [q]$ is $\Delta(\mc G) = D$. Moreover, the maximum codegree of $\mc H$ satisfies $\Delta_2(\mc H) \le \Delta_2(\mc G) = o(q)$. Finally, a proper $q$-edge-coloring of $\mc G$ corresponds to an $(E(\mc H))$-perfect matching in $\mc H$. Applying Theorem \ref{thm:main} with $A = E(\mc G)$ and $\rho = (1+o(1))k$, the number of proper $q$-edge-colorings of $\mc G$ is at most
\[
    \exp\paren{\frac{nD}{k}\int_0^1\log\paren{((1+o(1))k - k)D + q(o(1) + x^k)}\mathrm dx} = \paren{(1+o(1))\frac{q}{e^k}}^{Dn/k},
\]
where the final inequality holds by Lemma \ref{lemma:ecintegral}.
\end{proof}

\providecommand{\bysame}{\leavevmode\hbox to3em{\hrulefill}\thinspace}
\providecommand{\MR}{\relax\ifhmode\unskip\space\fi MR }
\providecommand{\MRhref}[2]{%
  \href{http://www.ams.org/mathscinet-getitem?mr=#1}{#2}
}


\begin{thebibliography}{10}

\bibitem{AS16}
N.~Alon and J.~H. Spencer, \emph{The probabilistic method}, 4th ed., Wiley Publishing, 2016.

\bibitem{BR91}
R.~A. Brualdi and H.~J. Ryser, \emph{Combinatorial matrix theory}, Cambridge University Press, 1991.

\bibitem{CT06}
T.~M. Cover and J.~A. Thomas, \emph{Elements of information theory}, second ed., John Wiley \& Sons, 2006. \MR{2239987}

\bibitem{CR11}
J.~Cutler and A.~J. Radcliffe, \emph{An entropy proof of the {K}ahn-{L}ov\'asz theorem}, Electron. J. Combin. \textbf{18} (2011), Paper 10, 9. \MR{2770115}

\bibitem{DJP21}
E.~Davies, M.~Jenssen, and W.~Perkins, \emph{A proof of the upper matching conjecture for large graphs}, J. Combin. Theory Ser. B \textbf{151} (2021), 393--416. \MR{4299069}

\bibitem{EMM19}
S.~Eberhard, F.~Manners, and R.~Mrazovi\'c, \emph{Additive triples of bijections, or the toroidal semiqueens problem}, J. Eur. Math. Soc. (JEMS) \textbf{21} (2019), 441--463. \MR{3896207}

\bibitem{EMM23}
\bysame, \emph{Transversals in quasirandom latin squares}, Proc. Lond. Math. Soc. (3) \textbf{127} (2023), 84--115. \MR{4611404}

\bibitem{EW12}
J.~Egan and I.~M. Wanless, \emph{Latin squares with restricted transversals}, J. Combin. Des. \textbf{20} (2012), 124--141. \MR{2868854}

\bibitem{E81}
G.~P. Egorychev, \emph{The solution of van der {W}aerden's problem for permanents}, Adv. in Math. \textbf{42} (1981), 299--305. \MR{642395}

\bibitem{F81}
D.~I. Falikman, \emph{Proof of the van der {W}aerden conjecture on the permanent of a doubly stochastic matrix}, Mat. Zametki \textbf{29} (1981), 931--938, 957. \MR{625097}

\bibitem{FK08}
S.~Friedland, E.~Krop, and K.~Markstr\"om, \emph{On the number of matchings in regular graphs}, Electron. J. Combin. \textbf{15} (2008), Research Paper 110, 28. \MR{2438582}

\bibitem{GW24}
A.~Ghafari and I.~M. Wanless, \emph{Latin squares whose transversals share many entries}, arxiv:2412.12466 (2024).

\bibitem{GL16}
R.~Glebov and Z.~Luria, \emph{On the maximum number of {L}atin transversals}, J. Combin. Theory Ser. A \textbf{141} (2016), 136--146. \MR{3479241}

\bibitem{K96}
J.~Kahn, \emph{Asymptotically good list-colorings}, J. Combin. Theory Ser. A \textbf{73} (1996), 1--59. \MR{1367606}

\bibitem{K18}
P.~Keevash, \emph{Counting designs}, J. Eur. Math. Soc. (JEMS) \textbf{20} (2018), 903--927. \MR{3779688}

\bibitem{K20}
M.~Kwan, \emph{Almost all {S}teiner triple systems have perfect matchings}, Proc. Lond. Math. Soc. (3) \textbf{121} (2020), 1468--1495. \MR{4144368}

\bibitem{LL13}
N.~Linial and Z.~Luria, \emph{An upper bound on the number of {S}teiner triple systems}, Random Structures Algorithms \textbf{43} (2013), 399--406. \MR{3124689}

\bibitem{L17}
Z.~Luria, \emph{New bounds on the number of n-queens configurations}, arxiv:1705.05225 (2017).

\bibitem{MMW06}
B.~D. McKay, J.~C. McLeod, and I.~M. Wanless, \emph{The number of transversals in a {L}atin square}, Des. Codes Cryptogr. \textbf{40} (2006), 269--284. \MR{2251320}

\bibitem{M23}
R.~Montgomery, \emph{A proof of the {R}yser-{B}rualdi-{S}tein conjecture for large even $n$}, arxiv:2310.19779 (2023).

\bibitem{M24}
R.~Montgomery, \emph{Transversals in {L}atin squares}, Surveys in combinatorics 2024, London Math. Soc. Lecture Note Ser., vol. 493, Cambridge Univ. Press, Cambridge, 2024, 131--158. \MR{4749040}

\bibitem{PS89}
N.~Pippenger and J.~Spencer, \emph{Asymptotic behavior of the chromatic index for hypergraphs}, J. Combin. Theory Ser. A \textbf{51} (1989), 24--42. \MR{993646}

\bibitem{R67}
H.~Ryser, \emph{Neuere probleme der kombinatorik}, Vorträge über Kombinatorik, Oberwolfach (1967), 69--91.

\bibitem{SIM23}
M.~Simkin, \emph{The number of n-queens configurations}, Advances in Mathematics \textbf{427} (2023), 109127.

\bibitem{S75}
S.~K. Stein, \emph{Transversals of {L}atin squares and their generalizations}, Pacific J. Math. \textbf{59} (1975), 567--575. \MR{387083}

\bibitem{T15}
A.~A. Taranenko, \emph{Multidimensional permanents and an upper bound on the number of transversals in {L}atin squares}, J. Combin. Des. \textbf{23} (2015), 305--320. \MR{3350096}

\bibitem{VLW01}
J.~H. van Lint and R.~M. Wilson, \emph{A course in combinatorics}, 2 ed., Cambridge University Press, 2001.

\bibitem{V91}
I.~Vardi, \emph{Computational recreations in {M}athematica}, Addison-Wesley Publishing Company, Advanced Book Program, Redwood City, CA, 1991. \MR{1150054}

\bibitem{V64}
V.~G. Vizing, \emph{On an estimate of the chromatic class of a {$p$}-graph}, Diskret. Analiz (1964), 25--30. \MR{180505}

\bibitem{Wa07survey}
I.~M. Wanless, \emph{Transversals in {L}atin squares}, Quasigroups Related Systems \textbf{15} (2007), 169--190. \MR{2379130}

\bibitem{Wa11survey}
\bysame, \emph{Transversals in {L}atin squares: a survey}, Surveys in combinatorics 2011, London Math. Soc. Lecture Note Ser., vol. 392, Cambridge Univ. Press, Cambridge, 2011, 403--437. \MR{2866738}

\end{thebibliography}
\end{document}